\documentclass[12pt]{amsart}

\usepackage{amssymb,latexsym}

\usepackage{color}

\usepackage{enumerate}

\usepackage[T1]{fontenc}

 \usepackage[french,english]{babel}


\newtheorem{thm}{Theorem}[section]

\newtheorem{lem}[thm]{Lemma}
\newtheorem{pro}[thm]{Proposition}
\theoremstyle{definition}

\numberwithin{equation}{section}

\renewcommand{\leq}{\leqslant}

\renewcommand{\geq}{\geqslant}

\newcommand{\R}{\mathbb{R}}
\newcommand{\C}{\mathbb{C}}
\newcommand{\M}{\mathcal{M}}
\newcommand{\A}{\mathcal{A}}

\newcommand{\re}{\textup{Re}}

\newcommand{\eps}{\varepsilon}

\newcommand{\newabstract}[1]{%
  \par\bigskip
  \csname otherlanguage*\endcsname{#1}%
  \csname captions#1\endcsname
  \item[\hskip\labelsep\scshape\abstractname.]
}

\begin{document}

\baselineskip=17pt

\title[]{Sharp omega results for the divisor and circle problems}

\author{Youness Lamzouri}

\address{
Universit\'e de Lorraine, CNRS, IECL, 
F-54000 Nancy, France}

\email{youness.lamzouri@univ-lorraine.fr}

%\date{}

\begin{abstract} 
We establish omega results for the  divisor and circle problems that are conjecturally sharp, while also determining the sign of the large values obtained. This improves on the work of Soundararajan and on the subsequent independent refinements of Sourmelidis and Mahatab, and gives the first improvement on Hafner's 1981 $\Omega_+$ result for the divisor problem and his $\Omega_-$ result for the circle problem. The main new ingredient is a resonance method which
works directly with the phase appearing in the Vorono\"i summation formula. This is
achieved by replacing the usual positive kernels by a one-sided sectorial
kernel, namely the density of a Gamma distribution, whose Fourier transform lies in a suitable
sector of the complex plane.
\end{abstract}

\subjclass[2020]{Primary 11N56; Secondary 11P21, 11L03}

\maketitle

%%%%%%%%%%%%%%%%%%%%%%%%%%%%%%%%%%%%%%%%%%%%%%%%%%%%%%%%%%%%%%%%%%%%%%%%%%%%%%%%%%

\section{Introduction}

The classical Gauss circle problem asks to determine the smallest $\alpha$ for which
$P(x)\ll_{\eps}x^{\alpha+\eps}$ holds for any $\eps>0$, where $P(x)$ is the remainder term in the asymptotic formula for the number of integer lattice points in a disc of radius $\sqrt{x}$. More precisely 
$$ P(x):=\sum_{n\leq x}r(n)-\pi x, $$
where $r(n)$ is the number of ways of writing $n$ as a sum of two squares. A closely related lattice point problem is the Dirichlet divisor problem which asks to estimate the remainder $\Delta(x)$ in the asymptotic formula for the summatory function of the divisor function, namely
$$ \Delta(x):= \sum_{n\leq x} d(n)- x(\log x+2\gamma-1) , 
$$ where $d(n)=\sum_{m\mid n}1$ denotes the divisor function, and $\gamma$ is the Euler-Mascheroni constant. It is widely believed that $\Delta(x), P(x) \ll_{\eps} x^{1/4+\eps}$, while the current best upper bound is due to Huxley \cite{Hu} and states that both $\Delta(x)$ and $P(x)$ are $\ll_{\eps} x^{131/416+\eps}.$ 
In 1916  Hardy \cite{Har} showed that
$$
\Delta(x)=
\begin{cases}
 \Omega_+\big((x\log x)^{1/4}\log_2 x\big),\\
 \Omega_-\big(x^{1/4}\big),
\end{cases}
\quad \textup{ and } \quad 
P(x)=
\begin{cases}
 \Omega_-\big((x\log x)^{1/4}\big),\\
 \Omega_+\big(x^{1/4}\big),
\end{cases}
$$
where here and throughout we let $\log_k$ denote the $k$-th iterate of the natural logarithm function.
Recall that for a real valued function $f$ and a positive function $g$ the symbol $f=\Omega(g)$ means that
$
  \limsup_{x\to\infty}\frac{|f(x)|}{g(x)}>0.
$
We write $f=\Omega_+(g)$ if
$
  \limsup_{x\to\infty}\frac{f(x)}{g(x)}>0,
$
and $f=\Omega_-(g)$ if
$
\liminf_{x\to\infty}\frac{f(x)}{g(x)}<0.
$
Lastly $f=\Omega_\pm(g)$ if $f=\Omega_+(g)$ and $f=\Omega_-(g)$.

Following Hardy's work, the $\Omega_-$ bound for $\Delta(x)$ and the
$\Omega_+$ bound for $P(x)$ were improved by several authors,
with the strongest results to date due to Corr\'adi and K\'atai \cite{CoKa}.
They proved that, for some positive constant $c$,
$$
  \Delta(x)=\Omega_-\left(
  x^{1/4}
  \exp\left(c(\log_2 x)^{1/4}(\log_3 x)^{-3/4}\right)
  \right),
$$
and obtained a corresponding $\Omega_+$ result for $P(x)$. The first
improvements on Hardy's $\Omega_+$ result for $\Delta(x)$ and $\Omega_-$
result for $P(x)$ were obtained by Hafner \cite{Haf} in 1981. He proved that, for some
positive constants $A$ and $B$,
$$
  \Delta(x)=\Omega_+\left(
  (x\log x)^{1/4}
  (\log_2 x)^{(3+2\log 2)/4}
  \exp\left(-A\sqrt{\log_3 x}\right)
  \right),
$$
and
$$
  P(x)=\Omega_-\left(
  (x\log x)^{1/4}
  (\log_2 x)^{(\log 2)/4}
  \exp\left(-B\sqrt{\log_3 x}\right)
  \right).
$$
 In 2003, Soundararajan \cite{Sound} introduced a different approach, obtaining the
following stronger omega results 
\begin{equation}\label{Eq:SoundDivisor}
  \Delta(x)=\Omega\left(
  (x\log x)^{1/4}
  (\log_2 x)^{\frac34(2^{4/3}-1)}
  (\log_3 x)^{-5/8}
  \right),
\end{equation}
and
\begin{equation}\label{Eq:SoundCircle}
  P(x)=\Omega\left(
  (x\log x)^{1/4}
  (\log_2 x)^{\frac34(2^{1/3}-1)}
  (\log_3 x)^{-5/8}
  \right).
\end{equation}
However, unlike Hafner's estimates, Soundararajan's method does not determine the sign of the large values obtained. More recently, Sourmelidis \cite{Sour} and Mahatab \cite{Ma}   independently refined Soundararajan's argument using the resonance method, improving the exponent of $\log_3 x$ in \eqref{Eq:SoundDivisor} and \eqref{Eq:SoundCircle}  from $-5/8$ to
$-3/8$.

Based on a probabilistic heuristic argument, Soundararajan conjectured that his omega results \eqref{Eq:SoundDivisor} and \eqref{Eq:SoundCircle} represent the true maximal order of $\Delta(x)$ and $P(x)$, respectively, up to a factor of $(\log_2 x)^{o(1)}.$
Recently, the author \cite{Lam} studied the distribution of large values of
$\Delta(x)$ and $P(x)$, showing that in a suitable range they have the same
large deviation behavior as their corresponding probabilistic random models. This led to the following more precise conjecture for the maximal order of these error terms (see \cite[Conjecture 1.1] {Lam}): 
$$ \max_{x\in [1, X]}|\Delta(x)| \asymp (X\log X)^{1/4}(\log_2 X)^{\frac{3}{4}(2^{4/3}-1)},$$
and 
$$ \max_{x\in [1, X]}|P(x)| \asymp (X\log X)^{1/4}(\log_2 X)^{\frac{3}{4}(2^{1/3}-1)}.$$
Our main result proves the lower bounds predicted by this conjecture, removing the remaining
$\log_3 x$ loss in the omega results of Soundararajan \cite{Sound}, 
Sourmelidis \cite{Sour}, and Mahatab \cite{Ma}, while also determining the sign of the large values obtained.
Consequently, this gives the first improvement on the $\Omega_+$ result for
$\Delta(x)$ and the $\Omega_-$ result for $P(x)$ since Hafner's work \cite{Haf}.

\begin{thm}\label{Thm:Omega}
We have
\begin{equation}\label{Eq:OmegaDivisor}
\Delta(x)=\Omega_+\left((x\log x)^{1/4}(\log_2 x)^{\frac34(2^{4/3}-1)}\right),
\end{equation}
and
\begin{equation}\label{Eq:OmegaCircle}
P(x)=\Omega_{-}\left((x\log x)^{1/4}(\log_2 x)^{\frac34(2^{1/3}-1)}\right).
\end{equation}
\end{thm}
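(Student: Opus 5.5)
We work with the truncated Voronoï formulae. For the divisor problem,
$$\Delta(x)=\frac{x^{1/4}}{\pi\sqrt2}\sum_{n\leq N}\frac{d(n)}{n^{3/4}}\cos\bigl(4\pi\sqrt{nx}-\tfrac{\pi}{4}\bigr)+O_\eps\bigl(x^{1/2+\eps}N^{-1/2}\bigr).$$
For the circle problem the analogous formula holds with $d(n)$ replaced by $r(n)$, the phase $2\pi\sqrt{nx}$, the constant $1/\pi$, and phase shift $-3\pi/4$. We set $t=\sqrt x$, take $t\in[T,2T]$, and write $\Delta(t^2)=t^{1/2}\,\re\bigl(e^{-i\pi/4}F(t)\bigr)/(\pi\sqrt2)+\text{error}$, where $F(t)=\sum_{n\leq N}d(n)n^{-3/4}e(2\sqrt n\,t)$.

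The plan is to construct a resonator $R(t)=\sum_{m\in\mathcal M}r(m)e(-2\sqrt m\,t)$. Here $\mathcal M$ is a set of squarefree-structured integers $m$ composed of primes in a range $[L,L^{c}]$ with $L\approx \log T\cdot(\log_2T)^{O(1)}$. The weights $r(m)$ are multiplicative and chosen as in Soundararajan's optimisation: $r(p)\approx (d(p)/p^{3/4})$-tuned, so that $\sqrt m$ has many additive coincidences $\sqrt n+\sqrt m=\sqrt{m'}$, i.e.\ $n=k^2\ell$ type structure with $\sqrt{n}=\sqrt{m'}-\sqrt{m}$. This is the mechanism producing the exponent $\tfrac34(2^{4/3}-1)$ for $d$, where $d(p)=2$, and $\tfrac34(2^{1/3}-1)$ for $r$, where $r(p)$ averages $1$ over $p$.

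We compare
$$I_1=\int K(t-T)\,\re\bigl(e^{-i\pi/4}F(t)\bigr)|R(t)|^2\,dt\quad\text{with}\quad I_2=\int K(t-T)|R(t)|^2\,dt.$$
The key new device, as announced in the abstract, is to take $K$ to be a Gamma density $K(u)=u^{a-1}e^{-u/\theta}/(\Gamma(a)\theta^a)$ on $u>0$. Its Fourier transform is $\hat K(\xi)=(1+2\pi i\theta\xi)^{-a}$, which lies in the sector $|\arg|\leq \pi a/2$. Choosing $a$ small, say $a<1/2$, and pairing with the fixed phase $e^{-i\pi/4}$ gives the following. Every diagonal-type term $\hat K\bigl(2(\sqrt n+\sqrt m-\sqrt{m'})\bigr)$ contributes, after multiplication by $e^{-i\pi/4}$ and taking real parts, with nonnegative real part (up to harmless modifications). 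Moreover, the exact coincidences $\sqrt n+\sqrt m=\sqrt{m'}$ contribute $\hat K(0)=1$ times $\cos(\pi/4)>0$.

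This is what fixes the sign. For $\Delta$ the positive contributions come from $+\cos(\cdot-\pi/4)$ and give $\Omega_+$. For $P$ the shift $-3\pi/4$ flips the sign and gives $\Omega_-$. Positivity on the whole sector is the crucial step, because it lets us drop all non-diagonal terms by positivity instead of bounding them, and this removes the $\log_3$ losses coming from off-diagonal estimates in \cite{Sour,Ma}.

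The steps are as follows.
\begin{enumerate}[(i)]
\item Bound the truncation error by choosing $N=T^{C}$, using $\int K|R|^2\ll$ trivial bounds and $|R|\leq\sum|r(m)|$.
\item Expand $I_1$. The sectorial property shows $I_1\geq \tfrac{1}{\sqrt2}\sum_{\sqrt n+\sqrt m=\sqrt{m'}}d(n)n^{-3/4}r(m)r(m')$, up to an error from terms that are near-coincidences but not real-positive. This error is controlled because $|\arg\hat K|\leq \pi a/2<\pi/4$, so $\re(e^{-i\pi/4}\hat K)\geq0$ everywhere.
\item Upper-bound $I_2\ll\sum_m r(m)^2$. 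This uses the spacing of distinct $\sqrt m$, namely $|\sqrt m-\sqrt{m'}|\gg M^{-1/2}$, together with the decay of $\hat K$, whose off-diagonal contributions in $I_2$ have modulus $\ll |\theta\xi|^{-a}$. Here $\theta$ must be large compared with $\sqrt{M}$. This step is the main obstacle, since with a slowly decaying $\hat K$ ($a$ small) the off-diagonal sum in $I_2$ is not absolutely small. We would try to handle it by writing $|R|^2$ as an $L^2$ norm and using Montgomery–Vaughan/Hilbert-type inequalities with the weight $K$, or by choosing $\theta$ enormous, so that $I_2\leq(1+o(1))\sum r(m)^2\cdot\sup K$-normalisation.
\item Optimise the multiplicative $r$ via the Soundararajan computation to get
$$I_1/I_2\gg(\log T)^{1/4}(\log_2T)^{\frac34(2^{4/3}-1)}$$
(respectively with $2^{1/3}$).
\item Since $K\geq0$, some $t$ in the support of $K$ near $T$ gives $\re\bigl(e^{-i\pi/4}F(t)\bigr)\geq I_1/I_2$. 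Using the tail of $K$ to localise $t\in[T,T^{2}]$, this yields \eqref{Eq:OmegaDivisor} and, analogously, \eqref{Eq:OmegaCircle}.
\end{enumerate}
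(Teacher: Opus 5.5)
Your proposal has genuine gaps, and the first one breaks the positivity argument at its root.

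\textbf{The kernel is in the wrong place.} With $K(t-T)$ you get
$\int K(t-T)e(\xi t)\,dt=e(\xi T)\widehat K(-\xi)$. The unimodular factor $e(\xi T)$ rotates every non-coincident term by an arbitrary angle. So $\re\bigl(e^{-i\pi/4}e(\xi T)\widehat K(-\xi)\bigr)$ has no sign, and the terms you discard in step (ii) cannot be dropped. The sector property of $\widehat K$ is true but useless after the shift.

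The sectorial trick only works with the kernel anchored at the origin, as in the paper's $K_\alpha(x/T)$ on $[0,\infty)$ with $T=X/(2\log X)$. There each term is exactly $T\,\re\bigl(e^{i\beta}\widehat K_\alpha((v-u-\lambda_n)T)\bigr)$. The price is that $[0,Y]$ must be removed as an error, using that $K_\alpha$ gives it relative mass $\ll (Y/T)^\alpha$. This is compared against $\int|R|^2K_\alpha(x/T)\,dx\geq T(1-r^2)^{-M}$, which gives the error term $F(0)\bigl(\frac{1+r}{1-r}\bigr)^M(Y\log X/X)^\alpha$. That error limits the resonator to $M\asymp\log X$ frequencies, and this is where the factor $(\log x)^{1/4}$ comes from.

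\textbf{Your resonator cannot resonate.} An exact coincidence $\sqrt n+\sqrt m=\sqrt{m'}$ forces $2\sqrt{nm}\in\mathbb Z$. Hence $n,m,m'$ must lie in one squarefree class.
\begin{itemize}
\item For your squarefree $\mathcal M$ there are no such coincidences at all.
\item In general, Cauchy--Schwarz bounds the coincidence sum by $\max_q\sum_k f(k^2q)\cdot\sum_m r(m)^2\ll\sum_m r(m)^2$, which is only a bounded gain.
\end{itemize}
Multiplicative weights supported on primes in $[L,L^c]$ are the device for Dirichlet series. There the frequencies $\log m$ make $nm=m'$ the diagonal; nothing analogous holds for $\sqrt m$.

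\textbf{The missing idea.} Put the resonator's frequencies in the additive semigroup $\A$ generated by $\{\lambda_m\}_{m\in\M}$, with $\M$ the indices of the $M$ largest values of $d(n)n^{-3/4}$:
$$R(t)=\prod_{m\in\M}\bigl(1-re(\lambda_mt)\bigr)^{-1}=\sum_{v\in\A}a_r(v)e(vt), \qquad a_r(v+\lambda_n)\geq r\,a_r(v).$$
You also need the quantitative sectorial bound $\re(e^{i\beta}\widehat K)\geq\delta\,\re\widehat K\geq0$, not mere nonnegativity. With it, keep only $n\in\M$ and $v=w+\lambda_n$. This recovers $\delta r\sum_{m\in\M}f(m)$ times the \emph{full} $\int|R|^2K$, off-diagonal terms included. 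So the upper bound for $I_2$ in step (iii), which you identify as the main obstacle and leave open, is never needed.

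\textbf{Source of the exponents.} The exponents $\frac34(2^{4/3}-1)$ and $\frac34(2^{1/3}-1)$ do not come from additive coincidences. They come from the size of the sum of the $M$ largest terms, $S(M)\asymp M^{1/4}(\log M)^{\frac34(2^{4/3}-1)}$, and likewise $L(M)$. These follow from the Balasubramanian--Ramachandra count $|\{n: nd(n)^{-4/3}\leq x\}|\sim c_1x(\log x)^{2^{4/3}-1}$ and its analogue for $r(n)$. Your proposal contains no such counting input.
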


Our approach proceeds differently from that of Soundararajan \cite{Sound}, and
from the subsequent improvements of Sourmelidis
\cite{Sour} and  Mahatab \cite{Ma}. To describe our method, we only focus on the case of $\Delta(x)$. By Vorono\"i's summation formula (see  \eqref{Eq:Voronoi} below) it suffices to obtain an omega result for $\re(e^{-i\pi/4} G(x))$, where $G$ is the exponential sum
\begin{equation}\label{Eq:ExpSumDirichlet}
G(x)= \sum_{n\leq T} \frac{d(n)}{n^{3/4}} e\big(2\sqrt{n} x\big),
\end{equation} 
with $T\ll x^A$ for some $A>0$, and where $e(\alpha):=e^{2\pi i \alpha}$. 
Soundararajan's method consists of  three main ingredients. The first is a convolution of $\re(e^{-i\pi/4} G(x))$ with the function $e(\sqrt{N}x) K(\sqrt{N}x)$, where $K(x)= \left(\frac{\sin(\pi x)}{\pi x}\right)^2$ is Fej\'er's kernel. This removes the phase $-\pi/4$, but loses control of the sign of
$\re(e^{-i\pi/4}G(x))$, and produces a weighted version of
\eqref{Eq:ExpSumDirichlet} which is essentially supported on integers $n\asymp N$. Second, he selects a set
$\mathcal M\subset [N/4,9N/4]$ and uses Dirichlet's theorem for Diophantine
approximation, together with several dilates of a well-chosen point, to make
the contribution of the terms in $\mathcal M$ dominate the remaining terms.
Finally, he chooses $\mathcal M$ to consist of integers in $[N/4,9N/4]$ with exactly $\lfloor \lambda \log_2N\rfloor$ distinct prime factors. Optimizing over $\lambda$ yields his omega results \eqref{Eq:SoundDivisor} and 
\eqref{Eq:SoundCircle}. 
Sourmelidis \cite{Sour} and independently Mahatab \cite{Ma} improve
Soundararajan's second step by replacing the Diophantine approximation
argument with the resonance method, exploiting the positivity of the
coefficients $d(n)n^{-3/4}$. This saves a factor of $(\log_3 x)^{1/4}$.
However, their arguments still rely on the first and third ingredients of Soundararajan's method, which are responsible for the loss of the sign and the remaining factor  $(\log_3 x)^{-3/8}$.  %begin with the same localization step, and
%therefore remain restricted to the contribution of integers $n\asymp N$. The
%loss comes from this localization, and is due to the uneven distribution of $d(n)$. Indeed, as observed by Soundararajan, the ideal omega
%result would be 
%$\Delta(x)= \Omega\left(x^{1/4} S(\log x)\right)$
%where $S(M)$ denotes the sum of the largest $M$ terms of the sequence
%$(d(n)n^{-3/4})_{n\g%eq 1}$.

The resonance method is especially effective for sums with positive
coefficients when the off-diagonal terms in the relevant moments cannot be
bounded efficiently. In this setting, one usually works with a ``positive''
kernel, by which we mean a kernel $K$ such that both $K$ and its Fourier transform $\widehat K$ are
non-negative (the Gaussian function is the standard example). %This positivity is used to extract the diagonal contribution. 
This principle appears, for
instance, in the work of Aistleitner, Mahatab and Munsch \cite{AMM} on large values of
$\zeta(1+it)$. Sourmelidis \cite{Sour} and independently Mahatab \cite{Ma} showed that the same philosophy can
also be applied to  trigonometric polynomials with positive coefficients. In particular, they used the  resonance method to obtain optimal lower
bounds for $\re(F(x))$ when
$F(x)=\sum_n a_n e(\lambda_n x)
$
has non-negative coefficients.
The difficulty in the present case is the presence of the factor \(e^{-i\pi/4}\), which destroys the direct positivity argument.  In
Soundararajan's method, this phase is removed by an initial convolution with
Fej\'er's kernel, but this step localizes the sum to integers $n\asymp N$
and loses control of the sign of the original quantity. Our approach proceeds differently, by avoiding this preliminary reduction and applying the resonance
method directly to $\re(e^{i\beta}F(x))$. To make this
possible, we replace the usual positive kernels by a one-sided\footnote{This means that the support of the kernel is contained in $[0, \infty)$.} \emph{sectorial}
kernel. More precisely, we take $K$ to be the probability density function of a Gamma
distribution (see \eqref{Eq:GammaKernel} below) so that
$
  \widehat K(\xi)=(1+2\pi i\xi)^{-\alpha}
$ for some $0<\alpha<1$.
The values of $\widehat K$ lie in a sector of the complex plane, and the
parameter $\alpha$ is chosen so that
$\re(e^{i\beta}\widehat K(\xi))
  \geq \delta\cdot \re\widehat K(\xi)\geq 0, $
for some constant $\delta>0$ and all $\xi\in \R$. This restores the positivity needed in the resonance method without first
removing the phase, and therefore avoids the localization step.

Using this argument we prove the following general result for trigonometric
polynomials with non-negative coefficients, twisted by a phase $e^{i\beta}$
with $\beta\in(-\pi/2,\pi/2)$. Let $f(n) \geq 0$ for all $n \in \mathbb{N}$, such that
$
 \sum_{n\geq 1} f(n) < \infty
$. Let $\{\lambda_n\}_{n\in \mathbb{N}}$ be a sequence of positive real numbers. 
For all $x \in \mathbb{R}$ we define
\begin{equation}\label{Eq:DefTrigonometric}
 F(x) := \sum_{n\geq 1} f(n)e(\lambda_n x).
\end{equation}
\begin{thm}\label{Thm:Main} Let $\beta\in \left(-\frac{\pi}{2},\frac{\pi}{2}\right) \setminus\{0\}$ and $0<r<1$ be fixed. Let $0<\delta<\cos(\beta)$ be a fixed real number and put 
$$ \alpha:= \frac{2}{\pi} \arctan\left(\frac{\cos\beta-\delta}{|\sin\beta|}\right).$$
Let $\M\subset \mathbb{N}$ be a finite set with cardinality $|\M|=M$. Then for any real numbers $3<Y<X$ we have
$$
\max_{x\in [Y,X]}\re\left(e^{i\beta}F(x)\right)
\geq \delta r\sum_{m\in\M}f(m) 
+ O\left( F(0) \left(\frac{1+r}{1-r}\right)^M
\left(\frac{Y\log X}{X}\right)^{\alpha}  \right).
$$
 
\end{thm}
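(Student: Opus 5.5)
The plan is to run the resonance method with the one-sided Gamma kernel described in the introduction. Let $K(t)=t^{\alpha-1}e^{-t}\mathbf 1_{t>0}/\Gamma(\alpha)$ with the $\alpha$ of the statement, so that $\widehat K(\xi)=(1+2\pi i\xi)^{-\alpha}$. Put $K_s(x)=s^{-1}K(x/s)$ with the scale $s:=X/\log X$.

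\textbf{Step 1 (sector estimate).} For every real $\xi$ the argument of $\widehat K(\xi)$ lies in $[-\alpha\pi/2,\alpha\pi/2]$. With $\alpha$ as defined, one has $\cos(\beta\pm\alpha\pi/2)\ge \delta\cos(\alpha\pi/2)$-type bounds. I would check the elementary inequality
$$\re\bigl(e^{i\beta}\widehat K(\xi)\bigr)\ge \delta\,\re \widehat K(\xi)\ge 0$$
directly. Writing $\widehat K(\xi)=\rho e^{-i\phi}$ with $|\phi|\le \alpha\pi/2$, it is equivalent to $\cos(\beta-\phi)\ge\delta\cos\phi$, that is, $\cos\beta+\sin\beta\tan\phi\ge\delta$. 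This holds precisely because $|\tan\phi|\le(\cos\beta-\delta)/|\sin\beta|$. Note also that $\re\widehat K$ is even.

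\textbf{Step 2 (the resonator and the two integrals).} Take the resonator
$$R(x)=\prod_{m\in\M}\bigl(1+r\cos(2\pi\lambda_m x)\bigr).$$
It satisfies $(1-r)^M\le R(x)\le(1+r)^M$, and it expands as a trigonometric polynomial with non-negative, conjugation-symmetric coefficients. Define the linear functional
$$L(g):=\re\int_{\R} g(x)K_s(x)\,dx .$$
Because $\re\widehat K$ is even, $L(\bar g)=L(g)$.

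Consider $N:=\int \re(e^{iβ}F(x))R(x)K_s(x)\,dx$ and $D:=\int R K_s$. Expanding $F\cdot R$, each frequency $\nu$ contributes a non-negative coefficient times $\re(e^{i\beta}\widehat K(\pm s\nu))$, and Step 1 makes every such term at least $\delta$ times the corresponding $L$-value. Discarding all terms except those coming from $f(m)e(\lambda_m x)$ with $m\in\M$ then gives
$$N\ge \delta\sum_{m\in\M}f(m)\,L\bigl(e(\lambda_m\cdot)R\bigr).$$

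\textbf{Step 3 (the main obstacle).} I need $L(e(\lambda_m\cdot)R)\ge r\,L(R)$. Write $R=(1+r\cos\theta_m)R_m$ and use the symmetry $L(e R_m)=L(\bar e R_m)$. The difference $L(eR)-rL(R)$ is then an explicit combination of $L(R_m)$, $L(eR_m)$ and $L(e^2R_m)$. A termwise comparison does not immediately close it, since a $-\frac r2L(R_m)$ term appears. I would first try to exploit that $L$ is a positive functional on non-negative trigonometric polynomials whose Fourier transform has non-negative real part, via the pointwise identity $\cos\theta\,(1+r\cos\theta)-r(1+r\cos\theta)=(\cos\theta-r)(1+r\cos\theta)$. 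If that fails, I would modify the resonator factor, for instance to $|1+\tfrac{r}{?}e(\lambda_m x)|^2$-type factors or a Poisson-kernel factor $\frac{1-r^2}{|1-re(\lambda_m x)|^2}$. For that choice $e\cdot P_r$ has Fourier coefficients $r^{|k-1|}\ge r\cdot r^{|k|}$ termwise, which gives exactly the factor $r$. The same choice also explains the $\bigl(\frac{1+r}{1-r}\bigr)^M$ in the error term, since then $\sup R/\inf R$-type ratios are $\bigl(\frac{1+r}{1-r}\bigr)^M$. So I expect to use the Poisson kernel product and normalise so that $\max R\le\bigl(\frac{1+r}{1-r}\bigr)^M\,\min$-scale.

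\textbf{Step 4 (localisation to $[Y,X]$).} Split $\int_0^\infty$ into $[0,Y]$, $[Y,X]$ and $(X,\infty)$. On $[Y,X]$, bound $N$ by $\max_{[Y,X]}\re(e^{i\beta}F)$ times $D$ plus the outer pieces. On the outer pieces use $|F|\le F(0)$ and the sup bound for $R$. The kernel masses are
$$\int_0^{Y}K_s\ll (Y/s)^\alpha=\Bigl(\frac{Y\log X}{X}\Bigr)^\alpha \quad\text{and}\quad \int_X^\infty K_s\ll e^{-X/2s}=X^{-1/2}.$$
The second is negligible against the first (up to adjusting $s$ by a constant). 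Dividing by $D$ and using $D\ge$ (lower bound of $R$)$\cdot\int K_s$-type control yields the stated error $O\bigl(F(0)\bigl(\frac{1+r}{1-r}\bigr)^M(Y\log X/X)^\alpha\bigr)$. Positivity of $\re\widehat K$ is again used to ensure $D>0$ and comparable to the constant term.
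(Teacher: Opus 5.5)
Once you commit to the Poisson-kernel product, your argument is the paper's. Indeed $\prod_{m\in\M}\frac{1-r^2}{|1-re(\lambda_m x)|^2}=(1-r^2)^M|R(x)|^2$ for the paper's resonator $R(x)=\prod_{m\in\M}(1-re(\lambda_m x))^{-1}$. Expanding as $\sum_{\mathbf k\in\mathbb Z^{\M}}r^{\|\mathbf k\|_1}e\bigl(\sum_m k_m\lambda_m x\bigr)$, your termwise comparison reads $r^{\|\mathbf j-\mathbf e_n\|_1}\ge r\cdot r^{\|\mathbf j\|_1}$. Indexing by $\mathbf k$ makes linear relations among the $\lambda_m$ harmless, and this is the paper's inequality $a_r(v+\lambda_n)\ge r\,a_r(v)$ in different bookkeeping.

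The cosine product of Step 2 should be dropped outright rather than tried first. Already for $|\M|=1$, as $s\lambda_m\to\infty$ (exactly the relevant regime),
$L(e(\lambda_m\cdot)R)=\re\widehat K(s\lambda_m)+\frac r2\re\widehat K(2s\lambda_m)+\frac r2\to\frac r2$, while $rL(R)=r+r^2\re\widehat K(s\lambda_m)\to r$. So that resonator cannot deliver the constant $\delta r$. Its bounds $(1-r)^M\le R\le(1+r)^M$ must also be replaced by $\left(\frac{1-r}{1+r}\right)^M\le R\le\left(\frac{1+r}{1-r}\right)^M$ for the Poisson product.

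The real slip is the denominator in Step 4. For the Poisson product the ratio of these pointwise bounds is $\left(\frac{1+r}{1-r}\right)^{2M}$, not $\left(\frac{1+r}{1-r}\right)^{M}$. So bounding $D$ below by $\inf R\cdot\int K_s$ only gives the error term with $\left(\frac{1+r}{1-r}\right)^{2M}$, which is weaker than the statement.

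You need the zero-frequency term instead. Since the coefficients are non-negative, $\re\widehat K\ge0$ and $\widehat K(0)=1$, one has $D=\sum_{\mathbf k}r^{\|\mathbf k\|_1}\re\widehat K\bigl(-s\sum_m k_m\lambda_m\bigr)\ge1$. Combined with $\sup R\le\left(\frac{1+r}{1-r}\right)^M$, this gives exactly the stated error. It is the paper's bound $J_1\ge T\sum_{v}a_r(v)^2\ge T(1-r^2)^{-M}$ paired with $|R|^2\le(1-r)^{-2M}$. Your closing remark about ``the constant term'' points at this, but it has to replace the $\inf R$ bound, not accompany it.

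A minor point: with $s=X/\log X$ one already has $\int_X^\infty K_s\ll(\log X)^{\alpha-1}X^{-1}\le(Y\log X/X)^\alpha$, so no rescaling of $s$ is needed. Your cruder $X^{-1/2}$ would not be dominated if $\alpha>1/2$ and $Y$ is bounded.
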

The restriction $\beta\in(-\pi/2,\pi/2)$ is natural. Indeed, without assumptions on
the linear relations among the frequencies $\lambda_n$, one cannot in
general force a trigonometric polynomial to point in an arbitrary direction. The
advantage of Theorem \ref{Thm:Main} is that, for angles in this range, we obtain a lower bound for $\re(e^{i\beta}F(x))$ without restriction on the frequencies $\{\lambda_{m}\}_{m\in \M}$, and
retain control of the sign of the large values. Finally, to deduce Theorem \ref{Thm:Omega} from Theorem \ref{Thm:Main}, it
remains to estimate the sum of the largest $M$ terms of the sequences
$(d(n)n^{-3/4})_{n\geq 1}$ and $(r(n)n^{-3/4})_{n\geq 1}$. This is achieved using a result of
Balasubramanian and Ramachandra \cite{BaRa} who used a variant of the Selberg--Delange method.

%We have chosen to focus on the divisor and circle problems, but our method may be applicable to other error terms with similar
%Vorono\"i-type expansions.

\subsection*{Acknowledgments} The author is supported by a junior chair of the Institut Universitaire de France. Part of this work was completed while the author was visiting the Max Planck Institute for Mathematics in Bonn and the Centre de Recherches Math\'ematiques in Montr\'eal. The author is grateful to both institutions for their hospitality and excellent working conditions. The author would also
like to thank Winston Heap for helpful discussions concerning the resonance method.
%%%%%%%%%%%%%%%%%%%%%%%%%%%%%%%%%%%%%%%%%%%%%%%%%%%%%%%%%%%%%%%%%%%%%%%%%%%%%%%%%%%%%%%%%%%%%%%%%%%%%%%%%%%%%%%%%%%%%%%%%%%%%%%%%%%%
\section{Proof of Theorem \ref{Thm:Main}}

\subsection{The resonance method and the choice of resonator}  Let $\M\subset \mathbb{N}$ be a finite set with cardinality $|\M|=M$. A key point in our argument is the choice of the resonator $R(t)$. In
\cite{Sour}, Sourmelidis uses a trigonometric polynomial
$$
R_1(t)=\sum_{q\leq Q} e(d_q t),
$$
where the frequencies $d_q$ are chosen from suitable linear combinations
$\sum_{m\in \M}\varepsilon_m\lambda_m$ with 
    $\varepsilon_m\in\{0,1\}.$
The resulting resonator is effective, but the method requires a rather
involved combinatorial argument.

On the other hand, Mahatab \cite{Ma} imposes the conditions that $\Lambda=\{\lambda_m : m\in \M\}$ is linearly independent over $\mathbb{Q}$ and $\Lambda\subset [C_1 T, 2T]$ for some constant $0<C_1<2$ where $T$ is large. Inspired by the construction in \cite{AMM}, he uses the resonator 
$$ R_2(t)= \sum_{u\in \mathbb{N}(\Lambda)} \widetilde{r}(u) e(u t), $$
where $\widetilde{r}(u)=e^{-u/2T}$ and $\mathbb{N}(\Lambda)$ is the set of linear combinations of elements in $\Lambda$ with non-negative integer coefficients. Using the linear independence of the $\{\lambda_m\}_{m\in \M}$ and the fact that $\widetilde{r}(u+v)=\widetilde{r}(u)\widetilde{r}(v)$, one  may write $R_2(t)$ as the following product
$$ R_2(t)=\prod_{m\in \M}
        \left(1-\widetilde{r}(\lambda_m)e(\lambda_m t)\right)^{-1}.
$$
This choice is very natural from the point of view of the resonance method.
However, this  construction does not work with general sets $\M$ since the weight $\widetilde{r}(\lambda_m)$ decays with the size of the
frequency, and the useful contribution therefore comes from frequencies
lying in a localized range. 

Instead of these choices, we will use a product resonator with a
fixed weight, which turns out to be better adapted for our method. We define
\begin{equation}\label{Eq:TheResonator}
 R(t) := \prod_{m \in \M} \left(1-r e(\lambda_m t)\right)^{-1},
\end{equation}
where $0<r<1$ is a parameter to be chosen. This choice keeps the multiplicative structure of the
product resonator, but does not impose any localization condition on
the frequencies $\{\lambda_m\}_{m\in\mathcal M}$.  Another useful feature of our method is that, unlike Mahatab's argument, no linear independence
assumption on the frequencies is required. 
Expanding the  product \eqref{Eq:TheResonator} we obtain 
$$
 R(t)=\sum_{v\in \A} a_r(v)e(v t), 
$$
where $\mathcal A$ denotes the set of all linear combinations of the
$\lambda_m$, $m\in\mathcal M$, with non-negative integer coefficients, namely
$$
\mathcal A
:=
\left\{
\sum_{m\in\mathcal M} b(m)\lambda_m
:\ b(m)\in \mathbb Z_{\geq 0}\ \text{for all } m\in\mathcal M
\right\}.
$$
Moreover, if $v\in \A$ has exactly $\ell$ distinct representations
\begin{equation}\label{Eq:RepresentationsA}
 v=\sum_{m\in \M} b_1(m)\lambda_m = \sum_{m\in \M} b_2(m)\lambda_m
= \cdots=\sum_{m\in \M} b_\ell(m)\lambda_m,
\end{equation}
with $b_j(m)\in \mathbb Z_{\geq 0}$ then
\begin{equation}\label{Eq:FormulaAr}
 a_r(v)=\sum_{j=1}^{\ell} r^{\sum_{m\in \M} b_j(m)}.
\end{equation}
We will make use of the following inequality 
\begin{equation}\label{Eq:IneqCoeffA}
a_r(v+ \lambda_n)\geq r a_r(v),
\end{equation}
which holds for all $v\in \A$ and all $n\in \M$. This follows from the simple fact that if $v$ has exactly $\ell$ distinct representations as in \eqref{Eq:RepresentationsA} then the element $v+\lambda_n$ has at least the following
$\ell$ distinct representations
$$
v+\lambda_n
=
\sum_{m\in \mathcal M} \widetilde b_1(m)\lambda_m
=\cdots=
\sum_{m\in \mathcal M} \widetilde b_\ell(m)\lambda_m,
$$
where for all $1\leq j\leq \ell$, we have $\widetilde{b}_j(m)=b_j(m) $ for $ m\neq n$ and
$
\widetilde{b}_j(n)=b_j(n)+1.
$
Therefore, we get
$$
a_r(v+\lambda_n) \geq \sum_{j=1}^{\ell} r^{\sum_{m\in \M} \widetilde{b}_j(m)}
= r\left(\sum_{j=1}^{\ell} r^{\sum_{m\in \M} b_j(m)}\right)
= r a_r(v).
$$

We will deduce Theorem \ref{Thm:Main} from the following proposition once we choose our suitable kernel $K\in L^1(\mathbb{R})$. The construction of  such a kernel will be done in Section \ref{Sec:Kernel}. We will adopt the following convention for the Fourier transform of $K$
$$ \widehat{K}(\xi)= \int_{\R} K(x) e(-\xi x) dx.$$
\begin{pro}\label{Pro:Resonance}
Let $f(n) \geq 0$ for all $n \in \mathbb{N}$, such that
$
 \sum_{n\geq 1} f(n) < \infty
$. Let $\{\lambda_n\}_{n\in \mathbb{N}}$ be a sequence of positive real numbers, and $F$ be defined by \eqref{Eq:DefTrigonometric}. Let $\beta \in \left(-\frac{\pi}{2},\frac{\pi}{2}\right)$ and let $\M\subset \mathbb{N}$ be a finite set. Let $K \in L^1(\mathbb{R})$ be a real valued kernel such that $\textup{supp}(K) \subset [0,\infty)$ and  
\begin{equation}\label{Eq:SectorialKernel}
\re\left(e^{i\beta}\widehat{K}(\xi)\right) \geq \delta \cdot \re\left(\widehat{K}(\xi)\right) \geq 0
\end{equation}
for some positive constant $\delta$ and all $\xi\in\mathbb{R}$. Then for all real numbers $T\geq 1$ we have 
\begin{equation}\label{Eq:MainInequality}
\int_0^\infty \re\left(e^{i\beta}F(x)\right)|R(x)|^2K\left(\frac{x}{T}\right)\,dx
\geq \delta r\left(\sum_{m\in \M}f(m)\right)\cdot
\int_0^\infty |R(x)|^2K\left(\frac{x}{T}\right)\,dx.
\end{equation}
\end{pro}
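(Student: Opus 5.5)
We expand everything into Fourier coefficients and reduce \eqref{Eq:MainInequality} to a termwise comparison using \eqref{Eq:SectorialKernel} and \eqref{Eq:IneqCoeffA}.

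\emph{Step 1: expand both sides.} Write $|R(x)|^2=\sum_{u,v\in\A}a_r(u)a_r(v)e((u-v)x)$. Each integral runs over $[0,\infty)$, and since $\textup{supp}(K)\subset[0,\infty)$ we may extend it to $\R$. We use
$$\int_\R e(\xi x)K(x/T)\,dx=T\widehat K(-T\xi).$$
For the right-hand side this gives
$$I_0:=\int_0^\infty |R|^2K(x/T)\,dx=T\sum_{u,v}a_r(u)a_r(v)\widehat K(T(v-u)).$$
This is real. By the symmetry $u\leftrightarrow v$ and the fact that $K$ is real, so that $\widehat K(-\xi)=\overline{\widehat K(\xi)}$, we get $I_0=T\sum_{u,v}a_r(u)a_r(v)\re\widehat K(T(v-u))$.

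For the left-hand side, write $\re(e^{i\beta}F)=\tfrac12(e^{i\beta}F+e^{-i\beta}\overline F)$. The $F$ part is
$$T\sum_n f(n)\sum_{u,v}a_r(u)a_r(v)\widehat K(T(v-u-\lambda_n)).$$
The conjugate part is the complex conjugate of this expression; to see this, swap $u$ and $v$ and use $\widehat K(-\xi)=\overline{\widehat K(\xi)}$. Hence
$$\text{LHS}=T\sum_n f(n)\sum_{u,v}a_r(u)a_r(v)\,\re\big(e^{i\beta}\widehat K(T(v-u-\lambda_n))\big).$$

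\emph{Step 2: positivity and dropping terms.} By \eqref{Eq:SectorialKernel}, every term in the last sum is nonnegative, because $f(n)\ge0$ and $a_r\ge0$. We therefore drop all $n\notin\M$. For $n\in\M$ we bound $\re(e^{i\beta}\widehat K)\ge\delta\re\widehat K\ge0$. This gives
$$\text{LHS}\ge\delta T\sum_{n\in\M}f(n)\sum_{u,v}a_r(u)a_r(v)\re\widehat K(T(v-u-\lambda_n)).$$

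\emph{Step 3: shift and compare coefficients.} Fix $n\in\M$. Restrict the inner sum to pairs with $v=w+\lambda_n$ and $w\in\A$; the map $w\mapsto w+\lambda_n$ is injective into $\A$, and the dropped terms are nonnegative. By \eqref{Eq:IneqCoeffA}, $a_r(w+\lambda_n)\ge r\,a_r(w)$, and we use $\re\widehat K\ge0$ once more. Then
$$\sum_{u,v}\cdots\ \ge\ r\sum_{u,w\in\A}a_r(u)a_r(w)\re\widehat K(T(w-u))=r\,I_0/T.$$
Summing over $n\in\M$ gives \eqref{Eq:MainInequality}.

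\emph{Main obstacle: convergence.} The expansion in Step 1 needs justification, because $\A$ is infinite and $F$ is an infinite series. The coefficients satisfy $\sum_v a_r(v)=(1-r)^{-M}<\infty$ and $\sum f(n)<\infty$, so the series for $R$ and $F$ converge absolutely and uniformly. Since $K\in L^1$, Fubini permits exchanging sums and integrals. The rearrangements in Steps 2--3 are legitimate because all the series involved are absolutely convergent, and $\widehat K$ is bounded by $\|K\|_1$.
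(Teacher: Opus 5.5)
Your proposal is correct and follows essentially the same route as the paper. You expand $F$ and $|R|^2$, apply the sectorial condition termwise, keep only $n\in\M$ and pairs with $v=w+\lambda_n$, invoke $a_r(w+\lambda_n)\geq r\,a_r(w)$, and recognize the remaining double sum as $T^{-1}\int_0^\infty |R(x)|^2K(x/T)\,dx$. Your justification of the interchanges, via $\sum_v a_r(v)=(1-r)^{-M}$, $\sum_n f(n)<\infty$ and $K\in L^1$, is just a more explicit version of the paper's appeal to absolute convergence.
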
 
\begin{proof}
Let 
$$
I_1:= \int_0^\infty |R(x)|^2K\left(\frac{x}{T}\right)\,dx,
$$
and 
$$
I_2:= \int_0^\infty \re\left(e^{i\beta}F(x)\right)|R(x)|^2K\left(\frac{x}{T}\right)\,dx.
$$
Expanding $F(x)$ and $|R(x)|^2$ we obtain
\begin{equation}\label{Eq:InequalityI2}
 \begin{aligned}
I_2
&= \re\left(e^{i\beta}\int_0^\infty \sum_{n\geq 1} f(n)e(\lambda_n x)
\sum_{u,v\in \A} a_r(u)a_r(v)e((u-v)x)K\left(\frac{x}{T}\right)\,dx\right)\\
&= \sum_{n\geq 1} f(n)\sum_{u,v\in \A} a_r(u)a_r(v)
\re\left(e^{i\beta}\int_0^\infty e\left((\lambda_n+u-v)x\right)K\left(\frac{x}{T}\right)\,dx\right)\\
& =T\sum_{n\geq 1} f(n)\sum_{u,v\in \A} a_r(u)a_r(v)
\re\left(e^{i\beta}\widehat{K}\big((v-u-\lambda_n)T\big)\right)\\
& \geq \delta T\sum_{n\geq 1} f(n)\sum_{u,v\in \A} a_r(u)a_r(v)
\re\left(\widehat{K}\big((v-u-\lambda_n)T\big)\right),
\end{aligned} 
\end{equation}
where we interchanged the sums and integral by absolute convergence, and where the last inequality follows from \eqref{Eq:SectorialKernel}.
Since all the terms on the right hand side are non-negative, we will only keep those $n\geq 1$ such that $n\in \M$. Moreover, for each such $n$ we only keep those $v\in \A$ in the second sum that are of the form $v=w+\lambda_n$ with $w\in \A$. Therefore, by \eqref{Eq:IneqCoeffA} we deduce that
\begin{equation}\label{Eq:PositivityI2}
\begin{aligned}
I_2 &\geq \delta T\sum_{n\in \M}f(n)\sum_{u,w\in \A} a_r(u)a_r(w+\lambda_n)
\re\left(\widehat{K}\left((w-u)T\right)\right)\\
&
\geq \delta r\cdot T\sum_{n\in \M} f(n)\sum_{u,w\in \A} a_r(u)a_r(w)
\re\left(\widehat{K}\big((w-u)T\big)\right).
\end{aligned}
\end{equation}
Now we observe that the inner sum over $(u, w)\in \A^2$ equals 
\begin{align}\label{Eq:IdentityI1}
\re\left(\sum_{u,w\in \A} a_r(u)a_r(w)\widehat{K}\left((w-u)T\right)\right)
&=\re\left(\int_0^\infty \sum_{u,w\in \A} a_r(u)a_r(w)
 e((u-w)Tx)K(x)\,dx\right)\nonumber\\ 
& =\re\left(\int_0^\infty |R(Tx)|^2 K(x)\,dx\right) = \frac{I_1}{T},
\end{align}
by a simple change of variable, and since $K$ is real valued. Combining this identity with  \eqref{Eq:PositivityI2}  completes the proof.
\end{proof}

 \subsection{Choosing the appropriate kernel $K$.}\label{Sec:Kernel} Unlike in the works of Mahatab \cite{Ma}, Sourmelidis \cite{Sour}, and
other applications of the resonance method to $L$-functions, such as
\cite{AMM}, we cannot use kernels whose Fourier transforms are
non-negative. Indeed, if $h\in L^1(\mathbb R)$ is supported in
$[0,\infty)$, then $h$ cannot have a real-valued Fourier transform unless
it is identically zero. Moreover, in  order for the one-sided kernel $K$ to satisfy \eqref{Eq:SectorialKernel}, its convolution operator $A(h)=K* h$ needs to be ``sectorial'', more precisely the range of the  Fourier transform of $K$ must lie inside a ``sector'' of the complex plane, namely
$\{z \in \C : 
|\arg z|<\theta_0\},$ for some $0<\theta_0<\pi.
$ Sectorial operators appear in various areas of mathematics, including fractional calculus, partial differential equations, and spectral theory. 

We will take $K$ to be the probability density function of the Gamma distribution $\Gamma(\alpha,1)$. More precisely, for a parameter $0<\alpha<1$ to be chosen (in terms of the angle $\beta$) we define
\begin{equation}\label{Eq:GammaKernel}
K_\alpha(x):=
\begin{cases}
\dfrac{x^{\alpha-1}e^{-x}}{\Gamma(\alpha)} & \text{if } x>0,\\
0 & \text{otherwise.}
\end{cases}
\end{equation}
Then we have 
$$
\int_\R K_\alpha(x)\,dx=\frac{1}{\Gamma(\alpha)}\int_0^\infty x^{\alpha-1}e^{-x}\,dx=1.
$$
The Fourier transform of $K_{\alpha}$ is
\begin{align*}
\widehat K_\alpha(\xi)
&=\int_{-\infty}^{\infty}K_\alpha(x)e(-\xi x)\,dx =\frac{1}{(1+2\pi i\xi)^\alpha}.
\end{align*}
Since $\arg(1+2\pi i\xi)=\arctan(2\pi \xi)$, we deduce that  
\begin{equation}\label{Eq:SecFourierK}
\arg \widehat K_{\alpha}(\xi)=-\alpha\arctan(2\pi \xi)\in \left(-\frac{\alpha\pi}{2},\frac{\alpha\pi}{2}\right),
\end{equation}
and hence $\re(\widehat{K}_{\alpha}(\xi)) > 0$ for all $\xi\in \R$. Assume that $\beta\in \left(-\frac{\pi}{2},\frac{\pi}{2}\right) \setminus\{0\}$ and let $0<\delta<\cos(\beta)$ be a positive constant. We now choose 
\begin{equation}\label{Eq:ChoiceAlpha}
\alpha= \frac{2}{\pi} \arctan\left(\frac{\cos\beta-\delta}{|\sin\beta|}\right).
\end{equation}
Then $0<\alpha<1$ and $\tan(\alpha \pi/2)= (\cos(\beta)-\delta)/|\sin\beta|$. Writing $\theta_{\xi}= \arg \widehat K_{\alpha}(\xi)$ we obtain 
\begin{align}\label{Eq:SecKernelProof}
 \re\big(e^{i\beta} \widehat K_{\alpha}(\xi)\big) &= \cos(\theta_{\xi})\big(\cos(\beta)-\sin(\beta)\tan(\theta_{\xi})\big)|\widehat{K}_{\alpha}(\xi)|\nonumber\\
 & \geq \delta \cos(\theta_{\xi})|\widehat{K}_{\alpha}(\xi)|= \delta \cdot \re(\widehat{K}_{\alpha}(\xi)),
 \end{align}
since 
$$ |\sin(\beta)\tan(\theta_\xi)|\leq |\sin(\beta)|\tan\left(\frac{\alpha\pi}{2}\right)=\cos(\beta)-\delta, $$
by \eqref{Eq:SecFourierK} and our choice of $\alpha.$

%%%%%%%%%%%%%%%%%%%%%%%%%%%%%%%%%%%%%%%%%%%%%%%%%%%%%%%%%%%%%%%%%%%%%%%%%%%%%%%%%%%%%%%%%%%%%%
\subsection{Proof of Theorem \ref{Thm:Main}} Let $T:=X/(2\log X)$, $R$ be defined by \eqref{Eq:TheResonator} and $K_{\alpha}$ the kernel in \eqref{Eq:GammaKernel}. Note that $|R(x)|\leq (1-r)^{-M}$ for all $x\in \R$ and
$$
K_\alpha(x)\ll
\begin{cases}
 e^{-x} & \text{if } x\geq 1,\\
 x^{\alpha-1} & \text{if  } 0<x<1.
\end{cases}
$$
Moreover, the bound $K_{\alpha}(x)\ll x^{\alpha-1}$ holds for all $x>0$. Therefore we get
\begin{align}\label{Eq:RightTailIntegral}
\int_{X}^{\infty}\re\left(e^{i\beta}F(x)\right)|R(x)|^2K_\alpha\left(\frac{x}{T}\right)\,dx \nonumber
&\ll F(0)(1-r)^{-2M}\int_{X}^{\infty}e^{-\frac{x}{T}}\,dx\\
&\ll F(0)(1-r)^{-2M}T e^{-\frac{X}{T}}\ll  \frac{F(0)}{X}(1-r)^{-2M}.
\end{align}
Moreover, we have
\begin{align}\label{Eq:LeftTailIntegral}
\int_0^{Y}\re\left(e^{i\beta}F(x)\right)|R(x)|^2K_\alpha\left(\frac{x}{T}\right)\,dx\nonumber
&\ll F(0)(1-r)^{-2M}\int_0^{Y}\left(\frac{x}{T}\right)^{\alpha-1}\,dx\\
&\ll F(0)(1-r)^{-2M}T^{1-\alpha}Y^\alpha. 
\end{align}
We now combine the estimates \eqref{Eq:RightTailIntegral} and \eqref{Eq:LeftTailIntegral} with \eqref{Eq:SecKernelProof} and Proposition \ref{Pro:Resonance} to deduce that 
\begin{align}\label{Eq:TruncationResonance}
\int_{Y}^{X}\re\left(e^{i\beta}F(x)\right)|R(x)|^2K_{\alpha}\left(\frac{x}{T}\right)\,dx 
& \geq 
\delta r\cdot \left(\sum_{m\in\M}f(m)\right)\int_0^\infty |R(x)|^2K_{\alpha}\left(\frac{x}{T}\right)\,dx \nonumber\\
& \quad \quad \quad +O\Big(F(0)(1-r)^{-2M}T^{1-\alpha}Y^\alpha\Big).
\end{align}
Furthermore, by the same estimates used to prove \eqref{Eq:RightTailIntegral}
and \eqref{Eq:LeftTailIntegral}, together with the bounds $K_{\alpha}(x)\geq 0$ and 
$|\re(e^{i\beta}F(x))|\leq F(0)$, valid for all $x$, we get
\begin{align}\label{Eq:TruncationResonance2}
&\int_{Y}^{X}\re\left(e^{i\beta}F(x)\right)|R(x)|^2K_{\alpha}\left(\frac{x}{T}\right)\,dx \nonumber\\
&\leq \max_{x\in [Y,X]}\re\left(e^{i\beta}F(x)\right)
\int_{Y}^{X}|R(x)|^2K_{\alpha}\left(\frac{x}{T}\right)\,dx\\
&=  \max_{x\in [Y,X]}\re\left(e^{i\beta}F(x)\right)J_1 + O\Big(F(0)(1-r)^{-2M}T^{1-\alpha}Y^\alpha\Big), \nonumber 
\end{align}
where 
$$ J_1:= \int_0^\infty |R(x)|^2K_{\alpha}\left(\frac{x}{T}\right)\,dx.
$$
To complete the proof we need to derive a lower bound on $J_1$. 
 By \eqref{Eq:IdentityI1} we have 
$$
J_1
= T\sum_{u, v\in\A}a_r(u)a_r(v)\re\left(\widehat K_{\alpha}(T(v-u))\right)
\geq T\sum_{v\in\A}a_r(v)^2 
$$
since $\re\left(\widehat K_{\alpha}(\xi)\right)\geq 0$ for all $\xi\in \R$ and $\widehat K_{\alpha}(0)=1$. Moreover, if $v\in \A$ has exactly $\ell$ distinct representations as in \eqref{Eq:RepresentationsA} then by \eqref{Eq:FormulaAr} we have
$$
a_r(v)^2=
\left(\sum_{j=1}^{\ell}r^{\sum_{m\in\M}b_j(m)}\right)^2
\geq \sum_{j=1}^{\ell}r^{2\sum_{m\in\M}b_j(m)}= a_{r^2}(v).
$$
This gives 
$$
\sum_{v\in\A}a_r(v)^2\geq \sum_{v\in\A}a_{r^2}(v)
= (1-r^2)^{-M},
$$
which implies that 
$$
J_1\geq T(1-r^2)^{-M}.
$$
 Combining this inequality with \eqref{Eq:TruncationResonance} and \eqref{Eq:TruncationResonance2} we deduce that 
$$
\max_{x\in [Y,X]}\re\left(e^{i\beta}F(x)\right)
\geq \delta r\sum_{m\in\M}f(m) 
+E 
$$
where 
$$
E\ll \frac{F(0)(1-r)^{-2M}T^{1-\alpha}Y^\alpha}{T(1-r^2)^{-M}}
\ll F(0) \left(\frac{1+r}{1-r}\right)^M
\left(\frac{Y\log X}{X}\right)^{\alpha}.
$$
This completes the proof.

%%%%%%%%%%%%%%%%%%%%%%%%%%%%%%%%%%%%%%%%%%%%%%%%%%%%%%%%%%%%%%%%%%%%%%%%%%%%%%%%%%%%
\section{Application to the divisor and circle problems}

To prove Theorem \ref{Thm:Omega} we will use the following result of Balasubramanian and Ramachandra \cite{BaRa}, who employed an extension of the Selberg-Delange method to estimate the number of positive integers $n$ such that $ng(n)\leq x$, for certain positive multiplicative functions $g$. 

\begin{lem}[Theorem 1 of \cite{BaRa}]\label{Lem:BaRa}
Let $\kappa>0$ be a real number, and $g$ be a positive multiplicative function such that $g(p)=\kappa$ for all primes $p$ and $g(n)\gg n^{-1/16}.$ Then there exists a positive constant $c_0$ such that 
$$ \left|\left\{n\geq 1: n g(n)\leq x\right\}\right| \sim c_0 x (\log x)^{1/\kappa-1}, \textup{ as } x\to \infty.$$
\end{lem}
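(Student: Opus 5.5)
The plan is a Selberg--Delange argument applied to the Dirichlet series of the sequence $(ng(n))_{n\ge1}$ rather than of $(n)$. Consider
$$
D(s):=\sum_{n\geq 1}\frac{1}{(ng(n))^{s}}=\prod_{p}\Big(1+\frac{1}{(\kappa p)^{s}}+\sum_{k\geq 2}\frac{1}{(p^kg(p^k))^{s}}\Big).
$$
The hypothesis $g(n)\gg n^{-1/16}$ gives $(p^kg(p^k))^{-\sigma}\ll p^{-15k\sigma/16}$. So $D(s)$ converges absolutely for $\sigma>16/15$, and the prime powers with $k\geq2$ contribute a factor that is holomorphic and bounded in $\sigma>8/15+\eps$. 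Since $\sum_p(\kappa p)^{-s}=\kappa^{-s}\log\zeta(s)+(\text{holomorphic in }\sigma>1/2)$, I would write
$$
D(s)=\zeta(s)^{\kappa^{-s}}H(s),
$$
with $H$ holomorphic and of moderate growth in a half-plane $\sigma>1-c$. Here $\zeta(s)^{w}$ is defined via $\log\zeta$ on the standard zero-free region. The counting function $N(x)=|\{n: ng(n)\leq x\}|$ is the summatory function of the coefficients of $D$, so I would apply a truncated Perron formula (or a smoothed version, then unsmoothe using monotonicity of $N$).

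The next step is to move the contour into the region to the left of $\sigma=1$ given by the classical zero-free region $\sigma\geq 1-c/\log(|t|+2)$, keeping a Hankel-type loop around $s=1$. In this region $|\zeta(s)^{w}|\ll (\log |t|)^{O(1)}$ uniformly for $|w|\leq \kappa^{-1}\cdot O(1)$ (note $|\kappa^{-s}|=\kappa^{-\sigma}$ is bounded there). Hence the horizontal and far-left pieces contribute $O(x\exp(-c'\sqrt{\log x}))$, as in the standard Selberg--Delange theorem. Everything reduces to the Hankel integral around $s=1$.

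Near $s=1$, write $\zeta(s)=(s-1)^{-1}Z(s)$ with $Z$ holomorphic and $Z(1)=1$. The key new feature is that the exponent $\kappa^{-s}$ varies with $s$, and this is the main obstacle. I would handle it by writing
$$
\zeta(s)^{\kappa^{-s}}=(s-1)^{-1/\kappa}\exp\!\Big(-(\kappa^{-s}-\kappa^{-1})\log(s-1)\Big)Z(s)^{\kappa^{-s}}.
$$
Since $\kappa^{-s}-\kappa^{-1}=O(|s-1|)$, the middle factor is $1+O(|s-1|\,|\log|s-1||)$ on the loop. The main term is then
$$
\frac{1}{2\pi i}\int_{\mathcal H}(s-1)^{-1/\kappa}H(1)\frac{x^s}{s}\,ds\sim \frac{H(1)}{\Gamma(1/\kappa)}\,x(\log x)^{1/\kappa-1}.
$$
The correction terms are bounded by the same Hankel computation with an extra factor $|s-1|\log(1/|s-1|)$. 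On the scale $|s-1|\asymp 1/\log x$ this saves a factor $\log_2 x/\log x$, giving $O(x(\log x)^{1/\kappa-2}\log_2 x)$. This yields the asymptotic with $c_0=H(1)/\Gamma(1/\kappa)>0$; positivity follows since $H(1)$ is a convergent Euler product of positive factors.

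The delicate points will be the uniform control of $\zeta(s)^{\kappa^{-s}}$ with a complex, $s$-dependent exponent, and checking that the loop estimate for the non-constant exponent really produces only lower-order terms. I expect the latter to be routine once one expands $\exp(-(\kappa^{-s}-\kappa^{-1})\log(s-1))$ to first order and bounds the remainder trivially on the Hankel contour of radius $1/\log x$.
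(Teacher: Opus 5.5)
Your proposal is correct and follows the route the paper relies on. The paper does not prove this lemma itself but quotes it as Theorem~1 of Balasubramanian--Ramachandra, and in the proof of Lemma~\ref{Lem:DivisorCircleFirstTerms} it describes this same Selberg--Delange variant: write the Dirichlet series of $(ng(n))_{n\geq 1}$ as a holomorphic factor times $\zeta(s)^{z(s)}$ with analytic exponent $z(s)=\kappa^{-s}$, and read the main term off from $z(1)=1/\kappa$.

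Two small points to tidy. First, your Euler-product bound already gives absolute convergence of $D(s)$ for $\re(s)>1$, not merely $\re(s)>16/15$, and that is what the Perron step on $\re(s)=1+1/\log x$ needs. Second, the finitely many small primes for which $|(\kappa p)^{-s}|$ is not small should be kept as holomorphic Euler factors in $H(s)$ rather than being logged.
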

As a consequence, we deduce the following lemma. 
\begin{lem}\label{Lem:DivisorCircleFirstTerms}
Let $x$ be large. There exist positive constants $c_1$ and $c_2$ such that 
\begin{equation}\label{Eq:DivisorBaRa}
\left|\left\{n\geq 1: n d(n)^{-4/3}\leq x\right\}\right| \sim c_1 x (\log x)^{2^{4/3}-1}, 
\end{equation}
and
\begin{equation}\label{Eq:CircleBaRa}
\left|\left\{n\geq 1: r(n)>0 \textup{ and } n r(n)^{-4/3}\leq x\right\}\right| \sim c_2 x (\log x)^{2^{1/3}-1}. 
\end{equation}
\end{lem}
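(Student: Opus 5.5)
The plan is to apply Lemma \ref{Lem:BaRa} with $g(n)=d(n)^{-4/3}$ for \eqref{Eq:DivisorBaRa}, and with a suitable multiplicative modification of $r(n)^{-4/3}$ for \eqref{Eq:CircleBaRa}.

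\textbf{The divisor case.} Set $g(n):=d(n)^{-4/3}$, which is positive and multiplicative. At primes we have $g(p)=2^{-4/3}$, so $\kappa=2^{-4/3}$ and $1/\kappa-1=2^{4/3}-1$. The remaining hypothesis is $g(n)\gg n^{-1/16}$, that is, $d(n)\ll n^{3/64}$. This holds because $d(n)\ll_\eps n^\eps$. More concretely, $d(n)/n^{3/64}=\prod_{p^a\| n}(a+1)p^{-3a/64}$, and only finitely many factors can exceed $1$, each boundedly. Lemma \ref{Lem:BaRa} then gives \eqref{Eq:DivisorBaRa} directly.

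\textbf{The circle case: reduction to a multiplicative function.} Here $r(n)=4\rho(n)$ with $\rho(n)=\sum_{d\mid n}\chi_{-4}(d)$ multiplicative. The obstacle is that $\rho$ vanishes on a set of positive density of primes powers, namely $p^a$ with $p\equiv 3 \bmod 4$ and $a$ odd. Also $\rho(p)=0$ for such $p$, while $\rho(p)=2$ for $p\equiv 1\bmod 4$ and $\rho(2)=1$. So the hypothesis ``$g(p)=\kappa$ for all $p$'' fails, and the zeros must be removed. The condition $nr(n)^{-4/3}\leq x$ is $n\rho(n)^{-4/3}\leq 4^{4/3}x$, which only rescales $x$.

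I would use the bijection $n=2^{e}\,m\,k^2$, where $m$ is composed of primes $\equiv1 \bmod 4$, $k$ is composed of primes $\equiv 3\bmod 4$, and $\rho(n)=\rho(m)$. To feed Lemma \ref{Lem:BaRa}, I would encode these triples as a single integer via a multiplicative bijection $\phi$ on $\mathbb{N}$ that maps primes to primes and carries the counting problem to $\{N:Ng(N)\leq x\}$ with $g(p)=2^{-4/3}$ at every prime. The same mechanism is behind the count of sums of two squares, with density $\asymp x/\sqrt{\log x}$.

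\textbf{The circle case: choosing $\phi$ and $g$.} The main difficulty is that $\phi$ must preserve size. The cleanest route I would try first is to define
$$g(n):=\frac{h(n)}{n}\quad\text{with}\quad h\text{ multiplicative},\quad h(p^a)=p^{a}\rho(p^a)^{-4/3}\ \text{for } p\equiv 1 \bmod 4.$$
The other primes must be absorbed: for $q\equiv 3\bmod 4$, a factor $q^{2b}$ would correspond to a prime contributing $g$-value $\kappa$. This does not match literally, so the argument needs adjusting.

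The honest alternative is to bypass Lemma \ref{Lem:BaRa} and rerun its Selberg--Delange argument with the Dirichlet series
$$\sum_{n} \rho(n)^{4s/3}\mathbf 1_{\rho(n)>0}\, n^{-s}\approx \zeta(s)^{2^{4/3}\cdot\frac12}\,L(s,\chi_{-4})^{2^{4/3}\cdot \frac12}\,H(s),$$
with $H$ holomorphic near $s=1$. This is analytic in a neighbourhood of $1$, and for the inverse problem ($ng(n)\leq x$, not $n\leq x$) one follows the saddle/Perron analysis of \cite{BaRa}. The effective $\kappa$ is then $2^{-4/3}$ on a set of primes of density $1/2$. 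Converting that, the exponent $1/\kappa-1$ becomes a density-weighted version, giving $\frac12(2^{4/3})+\frac12\cdot 0-1$ adjusted by the $k^2$ factor, which contributes only a convergent sum. The result is $(\log x)^{2^{1/3}-1}$, matching \eqref{Eq:CircleBaRa}.

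Concretely, I would note that \cite{BaRa} actually treats $g(p)=\kappa$ on primes of a fixed residue class with $g$ otherwise controlled, and invoke that form. Alternatively, sum over $k$ and $e$ the divisor-type count restricted to $m$, using the uniformity in $x/(2^ek^2)$ and dominated convergence. The latter is the step I expect to require the most care.
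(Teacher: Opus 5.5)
Your divisor case is exactly the paper's argument. For the circle case, your ``honest alternative'' is also the paper's route: rerun the Balasubramanian--Ramachandra Selberg--Delange argument on $\sum_{\rho(n)>0}\rho(n)^{4s/3}n^{-s}$, written in terms of $\zeta(s)L(s,\chi_4)$. However, the analytic input you state is false, and the error matters.

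The local factor at $p\equiv 1 \bmod 4$ is $\sum_{m\ge 0}(m+1)^{4s/3}p^{-ms}$. This agrees with $(1-p^{-s})^{-2^{4s/3}}$ up to a factor $1+O(p^{-2\sigma})$. So the correct statement is $\Phi(s)=\mathcal H(s)\bigl(\zeta(s)L(s,\chi_4)\bigr)^{z(s)}$, with the $s$-dependent exponent $z(s)=2^{4s/3-1}$ and $\mathcal H$ holomorphic in $\re(s)>1/2$. You froze the exponent at $z(1)=2^{1/3}$, so your cofactor is really $H(s)=\mathcal H(s)\exp\bigl((z(s)-z(1))\log(\zeta(s)L(s,\chi_4))\bigr)$. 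Since $z(s)-z(1)\sim \frac{4\log 2}{3}2^{1/3}(s-1)$ and $\log(\zeta L)= -\log(s-1)+O(1)$, this cofactor contains $\exp\bigl(-c(s-1)\log(s-1)\bigr)$ with $c\neq 0$, which has a branch point at $s=1$. Hence $H$ is not holomorphic near $s=1$, and the standard Selberg--Delange theorem with a fixed exponent does not apply as you set it up.

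The varying exponent $z(s)$, together with the non-integer frequencies $n\rho(n)^{-4/3}$, is precisely what the Balasubramanian--Ramachandra variant is built to handle. The main term is governed by $z(1)=2^{1/3}$, so your conclusion $x(\log x)^{2^{1/3}-1}$ survives once the factorization is stated with $z(s)$, as the paper does.

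Neither of your two ``concrete'' implementations removes the need for that rerun.
\begin{enumerate}
\item Lemma~\ref{Lem:BaRa} requires $g(p)=\kappa$ at every prime, and the paper explicitly notes that the circle case does not follow from it. You cannot invoke a residue-class version of that theorem without an actual statement to cite.
\item The decomposition $n=2^e m k^2$ reduces the count to $A(y)=\#\{m:\ p\mid m\Rightarrow p\equiv 1 \bmod 4,\ m\,d(m)^{-4/3}\le y\}$. Proving $A(y)\sim c\,y(\log y)^{2^{1/3}-1}$ is the same Selberg--Delange problem: its Dirichlet series is $\prod_{p\equiv 1 \bmod 4}\sum_{a\ge 0}(a+1)^{4s/3}p^{-as}$, with the same varying exponent. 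Lemma~\ref{Lem:BaRa} again does not cover it.
\end{enumerate}
The subsequent summation over $e$ and $k$ by dominated convergence is correct but adds a superfluous layer. Once you run the argument for $A(y)$, you may as well run it on $\Phi(s)$ directly, which is what the paper does.
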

\begin{proof}
The asymptotic formula \eqref{Eq:DivisorBaRa} follows readily from Lemma \ref{Lem:BaRa} with the choice $g(n)= d(n)^{-4/3}$, which satisfies the assumptions of Lemma \ref{Lem:BaRa}, since $d(n)\ll_{\eps}n^{\eps}$ for all $\eps>0$, and $g(p)=2^{-4/3}$ for all primes $p$. 

We now prove \eqref{Eq:CircleBaRa}. Although this asymptotic formula  does not follow directly from Lemma \ref{Lem:BaRa}, we will show that with minor modifications, the same variant of the Selberg--Delange method used by Balasubramanian and Ramachandra \cite{BaRa} also yields \eqref{Eq:CircleBaRa}.  Since $r(n)\ll_{\eps} n^{\eps}$, the series
$$
\Phi(s):=\sum_{\substack{n\geq 1\\ r(n)>0}} \frac{1}{(n r(n)^{-4/3})^{s}} 
$$
converges absolutely for $\re(s)>1$. Moreover, we know that $\rho(n)=r(n)/4$ is a multiplicative function and
\begin{equation}\label{Eq:SumSquaresPP}
\rho(p^m)
=
\begin{cases}
1 & \text{if } p=2,\\
\bigl(1+(-1)^m\bigr)/2 & \text{if } p\equiv 3 \pmod 4,\\
m+1 & \text{if } p\equiv 1 \pmod 4.
\end{cases}
\end{equation}
Hence, for $\re(s)>1$ we have 
\begin{align}\label{Eq:EulerProduct}
\Phi(s)&= 4^{4s/3} \sum_{\substack{n\geq 1\\ \rho(n)>0}} \frac{\rho(n)^{4s/3}}{n^{s}} \nonumber\\
& = 4^{4s/3}\left(1-\frac{1}{2^{s}}\right)^{-1}\prod_{p\equiv 3 \bmod 4} \left(1-\frac{1}{p^{2s}}\right)^{-1}\prod_{p\equiv 1 \bmod 4}\left(\sum_{m=0}^{\infty} \frac{(m+1)^{4s/3}}{p^{ms}} \right)\\
& = \mathcal{G}(s) \prod_{p\equiv 1 \bmod 4}\left(1-\frac{1}{p^s}\right)^{-2^{4s/3}} \nonumber
\end{align}
where $\mathcal{G}(s)$ is analytic  in $\re(s)>1/2$. On the other hand, for $\re(s)>1$
we have
$$
\zeta(s)L(s,\chi_4)
=
\left(1-\frac{1}{2^{s}}\right)^{-1}
\prod_{p\equiv 1\bmod 4}\left(1-\frac{1}{p^{s}}\right)^{-2}
\prod_{p\equiv 3 \bmod 4}\left(1-\frac{1}{p^{2s}}\right)^{-1},
$$
where $\chi_4$ is the non-principal character modulo $4$.
Therefore, we deduce that for $\re(s)>1$ we have
\begin{equation}\label{Eq:DirichletSeries}
\Phi(s)= \mathcal{H}(s)\bigl(\zeta(s)L(s,\chi_4)\bigr)^{2^{4s/3-1}}, 
\end{equation}
where $\mathcal{H}(s)$ is analytic in the half plane $\re(s)>1/2.$ We now use the same argument of Balasubramanian and
Ramachandra \cite{BaRa} with $\zeta(s)$ replaced by $\zeta(s)L(s,\chi_4)$ and
with the analytic exponent $z(s)=2^{4s/3-1}$. Since $z(1)=2^{1/3}$, this
gives \eqref{Eq:CircleBaRa} for some constant $c_2>0$.
\end{proof}
Using Lemma \ref{Lem:DivisorCircleFirstTerms} we prove the following proposition, which is a key ingredient in the proof of Theorem \ref{Thm:Omega}. 

\begin{pro}\label{Pro:SumLargestTerms}
For a positive integer $M\geq 1$, let $S(M)$ denote the sum of the largest $M$ values of the sequence
$\{d(n)n^{-3/4}:n\geq 1\}$, and let $L(M)$ denote the sum of the largest
$M$ values of the sequence $\{r(n)n^{-3/4}:r(n)>0\}$. Then, we have
$$
S(M)\asymp M^{1/4}(\log M)^{\frac34(2^{4/3}-1)}
$$
and
$$
L(M)\asymp M^{1/4}(\log M)^{\frac34(2^{1/3}-1)}.
$$
\end{pro}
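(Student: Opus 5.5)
The plan is a level-set (layer-cake) argument, using Lemma \ref{Lem:DivisorCircleFirstTerms} as the counting function. I treat $S(M)$; the case of $L(M)$ is identical, with \eqref{Eq:CircleBaRa} in place of \eqref{Eq:DivisorBaRa} and exponent $2^{1/3}-1$ in place of $2^{4/3}-1$.

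Write $a_n=d(n)n^{-3/4}$. The key observation is
$$a_n\geq y \iff n\,d(n)^{-4/3}\leq y^{-4/3}.$$
So the counting function $N(y):=|\{n: a_n\geq y\}|$ satisfies, for small $y$,
$$N(y)\sim c_1 y^{-4/3}\bigl(\log y^{-4/3}\bigr)^{\kappa},\qquad \kappa:=2^{4/3}-1.$$
In particular $N(y)$ is finite for each $y>0$, so the largest $M$ values exist. Let $y_M$ be the $M$-th largest value. Then $N(y_M)\geq M$, and $N(y)<M$ for $y>y_M$. Inverting the asymptotic gives
$$y_M^{-4/3}\asymp M(\log M)^{-\kappa},\qquad\text{that is,}\qquad y_M\asymp M^{-3/4}(\log M)^{3\kappa/4}.$$
One small point: $N$ jumps, so I will apply the asymptotic at $y_M$ and at $y_M(1+\epsilon)$ and use that the main term varies continuously. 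This gives $N(y_M)\asymp M$, which is enough up to constants.

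For the lower bound, $S(M)\geq M y_M\asymp M^{1/4}(\log M)^{3\kappa/4}$, since each of the $M$ terms is at least $y_M$.

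For the upper bound, use the layer-cake formula
$$S(M)\leq \sum_{a_n\geq y_M} a_n=\int_0^\infty N(\max(y,y_M))\,dy = y_M N(y_M)+\int_{y_M}^{\infty}N(y)\,dy.$$
Note that $a_n\leq d(1)=1$ for all $n$, so the integrand vanishes for $y>1$, and $N(y)\ll y^{-4/3}(\log(2/y))^{\kappa}$ on $(0,1]$. The first term is $\asymp My_M$. For the integral, $\int_{y_M}^1 y^{-4/3}(\log(2/y))^{\kappa}\,dy$ is dominated by the endpoint $y_M$ (substitute $y=e^{-t}$), giving $\ll y_M^{-1/3}(\log(1/y_M))^{\kappa}\asymp y_M N(y_M)\asymp My_M$. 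Hence $S(M)\asymp My_M$, which is the stated order.

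There is no serious obstacle. The only care needed is the inversion of $x(\log x)^{\kappa}\asymp M$ to $x\asymp M(\log M)^{-\kappa}$ and the handling of ties and jumps in $N$, both of which are routine.
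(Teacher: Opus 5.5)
Your proof is correct and is essentially the paper's argument. You invert the counting function from Lemma \ref{Lem:DivisorCircleFirstTerms} to locate the $M$-th largest value, $y_M\asymp M^{-3/4}(\log M)^{3\kappa/4}$, which is the reciprocal of the paper's $y$. Your layer-cake formula is exactly the paper's partial summation \eqref{Eq:PartialSummation}, and your bound $S(M)\geq My_M$ is a slightly simpler substitute for the paper's lower sandwich at $y-\eta$.

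One harmless slip: $a_n\leq 1$ is false (for instance $a_2=2^{1/4}$). However, $d(n)\leq 2\sqrt n$ gives $a_n\leq 2$, and boundedness is all the layer-cake step uses.
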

\begin{proof}

We shall only prove the desired estimate for $S(M)$, since  the proof for $L(M)$ follows along the exact same lines, using
\eqref{Eq:CircleBaRa} instead of \eqref{Eq:DivisorBaRa}. Define
$$
N(y):=\left|\left\{n\geq 1:n^{3/4}/{d(n)}\leq y\right\}\right|.
$$
Since
$n^{3/4}/d(n)\leq y$ is equivalent to 
$n d(n)^{-4/3}\leq y^{4/3}$, 
Lemma \ref{Lem:DivisorCircleFirstTerms} gives
\begin{equation}\label{Eq:NumberSequenceDivisor}
N(y)\sim c_3 y^{4/3}(\log y)^{2^{4/3}-1}
\end{equation}
for some constant $c_3>0$. Furthermore, by partial summation applied to this asymptotic formula we have
\begin{equation}\label{Eq:PartialSummation}
\sum_{n^{3/4}/d(n)\leq y}\frac{d(n)}{n^{3/4}}= \int_{0}^y \frac{d N(t)}{t}= \frac{N(y)}{y}+ \int_0^y\frac{N(t)}{t^2}dt\asymp y^{1/3}(\log y)^{2^{4/3}-1}.
\end{equation}
Ordering the sequence $\{d(n) n^{-3/4}\}_{n\geq 1}$ in decreasing order as follows 
$$
\frac{d(n_1)}{n_1^{3/4}}
\geq
\frac{d(n_2)}{n_2^{3/4}}
\geq \cdots,
$$
we get 
$$S(M)=\sum_{j=1}^M\frac{d(n_j)}{n_j^{3/4}}.$$
We now let $y= n_M^{3/4}/d(n_M)$. Then  observe that $N(y-\eta)< M\leq N(y)$ for any $\eta>0$, and hence we obtain $M\sim c_3 y^{4/3}(\log y)^{2^{4/3}-1}$ by \eqref{Eq:NumberSequenceDivisor}, which implies that 
\begin{equation}\label{Eq:EstimateYDivisor}
y\asymp M^{3/4}(\log M)^{-\frac34(2^{4/3}-1)}.
\end{equation}
Furthermore, for any $\eta>0$ we have 
$$
\sum_{n^{3/4}/d(n)\leq y-\eta}\frac{d(n)}{n^{3/4}}\leq S(M)\leq \sum_{n^{3/4}/d(n)\leq y}\frac{d(n)}{n^{3/4}}.
$$
Combining these bounds with \eqref{Eq:PartialSummation} and \eqref{Eq:EstimateYDivisor} completes the proof.
    
\end{proof}

\begin{proof}[Proof of Theorem \ref{Thm:Omega}]
We only work out the divisor case in detail. By Vorono\"i's summation formula (see Eq. (12.4.4) of Titchmarsh \cite{Ti}) we have for any $\eps>0$
\begin{equation}\label{Eq:Voronoi}
\Delta(x)=\frac{x^{\frac14}}{\pi\sqrt{2}}
\sum_{n\leq N}\frac{d(n)}{n^{\frac34}}
\cos\left(4\pi\sqrt{n x}-\frac{\pi}{4}\right)
+O_{\eps}\left(x^{-\frac14}+\left(\frac{T^2}{x}\right)^{\eps}
+\frac{x^{1+\eps}}{T}\right)
\end{equation}
where $N$ is a positive integer, and  
$
T= 2\pi \sqrt{x(N+ \frac12)}.
$ 

Let $\eps=1/1000$. Choosing $N= \lfloor X^{3+1/10}\rfloor$ where $X$ is large, we deduce that uniformly for $\sqrt{X}\leq x\leq X^3$ we have 
\begin{equation}\label{Eq:approxDivisor}
    \Delta(x^2)= \frac{x^{\frac12}}{\pi\sqrt{2}}
\sum_{n\leq N}\frac{d(n)}{n^{\frac34}}
\cos\left(4\pi\sqrt{n} x-\frac{\pi}{4}\right)
+O\left(x^{1/2-\eps}\right).
\end{equation}
We now define $f(n)=d(n)n^{-3/4}$ if $1\leq n\leq N$ and $f(n)$ equals $0$ otherwise. We let $\lambda_n= 2\sqrt{n}$ for all $n\geq 1$ and define 
$$ F(x)= \sum_{n\leq N} \frac{d(n)}{n^{3/4}} e\left(2\sqrt{n} x\right). 
$$
We order the $f(n)$ in decreasing order as follows 
$f(n_1)\geq f(n_2)\geq f(n_3)\geq \cdots$. Let $M=\lfloor(\log X)/4\rfloor$ and\footnote{Note that since $d(n)=n^{o(1)}$ and $M\ll \log X$, the first $M$ terms of the sequence $\{d(n)n^{-3/4}\}_{n\geq 1}$ all satisfy $n\leq N$.} put $\M=\{n_1, \dots, n_M\}.$ 
Then it follows from Theorem \ref{Thm:Main} with $\beta=-\pi/4$, $r= 1/3$ and $\delta=1/10$,  
that 
$$\max_{\sqrt{X}\leq x\leq X^3} \frac{\Delta(x^2)}{x^{1/2}} \geq \frac{1}{30\pi \sqrt{2}} \sum_{j=1}^M \frac{d(n_j)}{n_j^{3/4}} +O\left(F(0) 2^M \left(\frac{\log X}{X^{5/2}}\right)^{\alpha}+X^{-\eps/2}\right),$$
where $\alpha= (2/\pi)\arctan(1-\sqrt{2}\delta)>0.45.$ Using Proposition \ref{Pro:SumLargestTerms} together with the fact that
$$
F(0)= \sum_{n\leq N}\frac{d(n)}{n^{\frac34}}
\ll N^{\frac14+\eps}
\ll X^{4/5},
$$
completes the proof of \eqref{Eq:OmegaDivisor}.

The proof in the circle case is similar. We use the truncated
Vorono\"i formula for $P(x^2)$, namely
\begin{equation}\label{Eq:VoronoiCircle}
P(x^2)
=
-\frac{x^{1/2}}{\pi}
\sum_{n\leq N}\frac{r(n)}{n^{3/4}}
\cos\left(2\pi\sqrt n\,x+\frac{\pi}{4}\right)
+O\left(x^{1/2-\varepsilon}\right),
\end{equation}
which holds uniformly in the same range $\sqrt{X}\leq x\leq X^3$ after taking $N=\lfloor X^{3+1/10}\rfloor$ as before.
We then apply Theorem \ref{Thm:Main} with
$f(n)=r(n)n^{-3/4}$ if $1\leq n\leq N$ and $f(n)$ equals $0$ otherwise, 
$\lambda_n=\sqrt n$, $\beta=\pi/4$,
and with the same choices $r=1/3$, $\delta=1/10$,
$M=\lfloor(\log X)/4\rfloor$ and $\M$ is the set of indices of the largest $M$ terms of the sequence $\{f(n)\}_{n\geq 1}$. Since \eqref{Eq:VoronoiCircle} contains the
minus sign above, the lower bound for
$\re(e^{i\pi/4}F(x))$ gives a lower bound for $-P(x^2)$. Using the
estimate for $L(M)$ from Proposition \ref{Pro:SumLargestTerms}, together
with the bound
$$
\sum_{n\leq N}\frac{r(n)}{n^{3/4}}\ll N^{1/4+\varepsilon},
$$
completes the proof of \eqref{Eq:OmegaCircle}.
\end{proof}

\end{document}